\documentclass[12pt,a4paper]{amsart}

\usepackage[latin1]{inputenc}
\usepackage{amscd,latexsym,mathrsfs,amsfonts,amssymb,amsmath,amsthm,url}
\usepackage{color}
\usepackage{enumerate,verbatim}
\usepackage[pdftex,bookmarks,pdfnewwindow,plainpages=false,unicode,pdfencoding=auto]{hyperref}

\newtheorem{theorem}{Theorem}[section]

\newtheorem{corollary}[theorem]{Corollary}
\newtheorem{lemma}[theorem]{Lemma}
\newtheorem{proposition}[theorem]{Proposition}

\theoremstyle{remark}

\numberwithin{equation}{section}

\newcommand{\R}{\mathbb{R}}

\newcommand{\C}{\mathbb{C}}
\newcommand{\D}{\mathbb{D}}
\newcommand{\N}{\mathbb{N}}

\subjclass[2010]{Primary 32A35; Secondary 32A60, 47A16, 30C15.}
\providecommand{\keywords}[1]
{
  \small
  \textbf{\textit{}} #1
}

\begin{document}
	\title[Counterexamples to Shanks' Conjecture]{Polynomial Counterexamples to Shanks' Conjecture in Dirichlet-Type Spaces over the Bidisc}

        \author[O. Nav\'io]{Oliver Nav\'io}
	\address[O. Nav\'io]{Departamento de An\'alisis Matem\'atico e IMAULL,  Universidad de La Laguna, Lyc\'ee Fran\c{c}ais International de Tenerife Jules Verne, Avenida Astrof\'isico Francisco S\'anchez, s/n, 38206 San Crist\'obal de La Laguna, Santa Cruz de Tenerife,  Spain}
	\email{olivernavio@gmail.com}

	\author[D. Seco]{Daniel Seco}
	\address[D. Seco]{Departamento de An\'alisis Matem\'atico e IMAULL,  Universidad de La Laguna, Avenida Astrof\'isico Francisco S\'anchez, s/n, 38206 San Crist\'obal de La Laguna, Santa Cruz de Tenerife,  Spain} \email{dsecofor@ull.edu.es}

	\thanks{The first named author has been supported by the Lyc\'ee Fran\c{c}ais International de Tenerife Jules Verne. The second author is funded through grant PID2024-160185NB-I00 by the Generaci\'on de Conocimiento programme and through grant RYC2021-034744-I by the Ram\'on y Cajal programme from Agencia Estatal de Investigaci\'on (Spanish Ministry of Science, Innovation and Universities).}

	\subjclass[2020]{Primary: 47B32; Secondary: 30J05, 47A10, 47A16.}
	
	\keywords{Optimal polynomial approximants; Weak Shanks conjecture; Dirichlet-type spaces; Zero-free polynomials}

	\date{\today}
	
	\begin{abstract} 
	
    We study optimal polynomial approximants in spaces of Dirichlet-type over the bidisc. Given a nonzero function f, the linear optimal polynomial approximant (opa) to its reciprocal is the affine polynomial p for which pf is closest to the constant function 1 in the space norm. We prove that, for every positive value of the Dirichlet-type parameter, there exists a symmetric polynomial f with no zeros on the closed bidisc, while the corresponding linear optimal approximant has a zero inside the bidisc. This provides polynomial counterexamples to the version of the Weak Shanks Conjecture on these spaces. The construction also extends to higher-dimensional polydiscs and to anisotropic Dirichlet-type spaces.
    
	\end{abstract}
	
	\maketitle

\section{Introduction}

For $n\geq 1$, the polydisc $\D^n$ is the cartesian product of $n$ copies of the open unit disc $\D$. For $\alpha\in\mathbb R$, we denote by $D_\alpha(\D^n)$ the Dirichlet-type space over the polydisc, that is, the Hilbert space of all holomorphic functions
\[
 f(z)=\sum_{j\in\N_0^n} a_j z^j,
 \qquad z^j=z_1^{j_1}\cdots z_n^{j_n}, \phantom{ab} z \in \D^n,
\]
such that
\begin{equation} \label{eq:norma-alfa}
 \|f\|_{\alpha,n}^2
 :=
 \sum_{j\in\N_0^n} |a_j|^2\prod_{r=1}^n (j_r+1)^\alpha
 <\infty.
\end{equation}

Notice that the norm \eqref{eq:norma-alfa} defines an inner product
\[
 \langle f,g\rangle_{\alpha,n}
 =
 \sum_{j\in\N_0^n} a_j\overline{b_j}\prod_{r=1}^n (j_r+1)^\alpha,
 \qquad
 g(z)=\sum_{j\in\N_0^n} b_j z^j.
\]
This is a reproducing kernel Hilbert space, with kernel
\[
 K_w^\alpha(z)
 =
 \prod_{r=1}^n
 \left(\sum_{m\geq 0}\frac{(z_r\overline{w_r})^m}{(m+1)^\alpha}\right),
 \qquad z,w\in\D^n.
\]

The choices $\alpha=0$, $\alpha=1$, $\alpha= -1$ give, respectively, the Hardy, Dirichlet and Bergman spaces on the polydisc. See, for instance, \cite{Rudin, EFKMR, Gar, HKZ-bergmanspaces} for the basic concepts of these types of spaces of functions in one and several variables.
It is usual to distinguish these one-parameter spaces from those with $\alpha$ taking different values for each $r$ in \eqref{eq:norma-alfa}, by calling \emph{isotropic} Dirichlet-type spaces the former, while the others are \emph{anisotropic}. We will comment on the latter at the end of the article. 

For $n \geq 2$, we  set
\[
 M_n(z)=\frac{z_1+ \cdots +z_n}{n}.
\]
In the bidisc we simply write $M = M_2$. The coordinate shifts are bounded on these spaces; in particular, multiplication by $M_n$ is bounded. Since our construction only involves polynomials, many expressions below are finite sums, which avoids many issues of convergence.

We recall the notion of optimal polynomial approximants, following the terminology used in \cite{BKS}. Let \(\mathcal P_k\) denote the space of polynomials of degree at most \(k\), where the degree of the monomial $z_1^{j_1}\dots z_n^{j_n}$ is $j_1 + \dots + j_n$. If \(f\in D_\alpha(\mathbb D^n)\), the \(k\)-th \emph{optimal polynomial approximant} (opa) to \(1/f\) is the polynomial \(p_k\in\mathcal P_k\) which minimizes \[ \|pf-1\|_\alpha \] among all \(p\in\mathcal P_k\). Equivalently, \(p_k f\) is the orthogonal projection of \(1\) onto the finite-dimensional subspace \(f\mathcal P_k\). It follows that the opa exists and is unique whenever $f \not\equiv 0$. For $k=1$, we call $p_1$ the \emph{linear opa}. Opa were formally introduced in \cite{BCLSS1} although their study goes back to Chui \cite{Ch}

A function $f$ in $n$ variables will be called \emph{symmetric} if for every $z_1, \dots, z_n \in \D$
\[
 f(z_1,\dots, z_n)=f(z_{\sigma(1)}, \dots, z_{\sigma(n)})
\]
for every permutation $\sigma$ of $\{1, \dots, n\}$. If $f$ is symmetric, then its linear opa is also symmetric. Indeed, the inner product and the linear polynomial subspace are invariant under permutations. Therefore, the uniqueness of the minimizer guarantees that p can be written in the form
\begin{equation} \label{p=a+bM}
 p(z)=a+bM_n(z), \phantom{abv}
a,b\in \C
\end{equation}
In what follows, zero-free will refer to a holomorphic function that does not vanish on $\overline{\D}^n$.

The Weak Shanks Conjecture asserted in the case $\alpha =0$ that if a polynomial f is zero-free on the closed bidisc $\overline{\D^2}$, then all
its opas are zero-free in the open bidisc $\D^2$. The problem goes back to \cite{STJ} and for a general view we recommend \cite{BenCen, SS1, SS2}. It was disproved by B\'en\'eteau, Khavinson and Seco \cite{BKS}. Since the lack of zeros on the closed bidisc and the presence of a zero strictly inside the bidisc are both stable under small perturbations, one may pass from the original counterexample, which was not a polynomial, to polynomial examples. The purpose of this note is to show directly that the same assertion fails for every positive value of the Dirichlet-type parameter. A positive statement of an analogue to the Weak Shanks Conjecture disproved for the Hardy space of the bidisc \cite{BKS}, has been announced recently \cite{Augat} in the non-commutative setting. We believe our results are completely independent, and indeed, point in opposite directions. Other interesting and recent developments include \cite{Felder}.

The following is our main result.
\begin{theorem}\label{Resultado-Principal}
For every $\alpha>0$, there exists a symmetric zero-free polynomial $f$ on $\overline{\D}^2$ such that the linear opa to $1/f$ has a zero in $\D^2$.
\end{theorem}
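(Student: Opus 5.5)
By symmetry the linear opa has the form \eqref{p=a+bM}, $p=a+bM$. Its zero set in $\D^2$ is easy to read off. The map $(z_1,z_2)\mapsto M(z_1,z_2)$ sends $\D^2$ onto $\D$. Hence $p$ vanishes somewhere in $\D^2$ if and only if $b\neq 0$ and $|a/b|<1$. So the plan is to reduce the theorem to a scalar inequality for $f$, and then exhibit a suitable $f$.

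\emph{Step 1: normal equations.} Write $\langle\cdot,\cdot\rangle=\langle\cdot,\cdot\rangle_{\alpha,2}$. The product $pf$ is the orthogonal projection of $1$ onto $\mathrm{span}\{f,Mf\}$, so $pf-1$ is orthogonal to $f$ and to $Mf$. Every monomial of $Mf$ has degree at least one, so $\langle 1,Mf\rangle=0$. The second normal equation is therefore $a\langle f,Mf\rangle+b\|Mf\|^2=0$. Since $f(0)\neq 0$, the first equation forces $a\neq 0$, and we get
\[
\frac{|b|}{|a|}=\frac{|\langle Mf,f\rangle|}{\|Mf\|^2}.
\]
Thus the theorem reduces to finding, for each $\alpha>0$, a symmetric polynomial $f$, zero-free on $\overline{\D}^2$, with
\[
|\langle Mf,f\rangle_{\alpha,2}|>\|Mf\|_{\alpha,2}^2 .
\]

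\emph{Step 2: pass to a limit on the boundary.} Both sides of this inequality depend continuously on the coefficients of $f$. It is therefore enough to find a symmetric polynomial $f_0$ with $f_0\neq 0$ on $\D^2$ that satisfies the strict inequality, even if $f_0$ has zeros on the distinguished boundary. One then replaces $f_0$ by $f_r(z)=f_0(rz)$ with $r<1$ close to $1$. This $f_r$ is zero-free on $\overline{\D}^2$ and still satisfies the inequality. The natural candidates are polynomials in the two symmetric variables $M$ and $z_1z_2$, say
\[
f=1-tM+s\,z_1z_2+\dots
\]
with few terms, so that every inner product is a finite sum of explicit weights $(j_1+1)^\alpha(j_2+1)^\alpha$.

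\emph{Step 3: the obstacle.} The simplest choice $f=1-M$ does not work. A direct computation gives
\[
|\langle Mf,f\rangle|=\|M\|^2=2^{\alpha-1},
\qquad
\|Mf\|^2=2^{\alpha-1}+\frac{3^\alpha+2\cdot4^\alpha}{8},
\]
so the ratio is below $1$. Roughly, we need $\langle Mf,f\rangle$ to behave like $-t\|M\|^2$ with effective $|t|>1$, while $f$ remains zero-free.

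The mechanism I would try first is to let higher-order symmetric terms correct the zero set created by a large linear coefficient. For example, a term $s\,z_1z_2$ pushes the zeros of $1-tM$ out of the bidisc when $t>1$. This term contributes to $\langle Mf,f\rangle$ only through $\langle Mz_1z_2,M\rangle=0$-type pairings of mismatched degree, and to $\|Mf\|^2$ through weights like $(2+1)^\alpha(1+1)^\alpha$.

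The remaining work is then twofold. First, choose $(t,s)$, possibly as functions of $\alpha$, and verify zero-freeness on $\D^2$, for instance by writing $f$ in terms of $z_1+z_2$ and $z_1z_2$ and applying a Schur–Cohn type test. Second, check the strict inequality for all $\alpha>0$, likely by showing it holds uniformly as $\alpha\to 0^+$ and as $\alpha\to\infty$ and handling the intermediate range by monotonicity in $\alpha$. I expect this balancing act to be the main difficulty: the weights $(j+1)^\alpha$ grow with $\alpha$ and penalize the higher-degree correction terms, and the parameters must keep $f$ zero-free while the ratio stays above $1$.
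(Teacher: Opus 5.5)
\textbf{There is a genuine gap: the proposal never constructs $f$.} Your Steps 1 and 2 are correct. For symmetric $f$ with $f(0)\neq 0$, the criterion $|\langle Mf,f\rangle_\alpha|>\|Mf\|_\alpha^2$ is equivalent to the linear opa vanishing in $\D^2$. It is slightly sharper than the sufficient condition $H_\alpha(f)<0$ of Lemma~\ref{Lema_H_alfa}, and the dilation $f_0(rz)$ legitimately removes boundary zeros. But the whole content of the theorem is the existence of such an $f$.

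The one concrete mechanism you suggest provably fails. For $f=1-tM+s\,z_1z_2$ with $t,s\in\C$,
\[
\langle Mf,f\rangle_\alpha=-\bar t\,2^{\alpha-1}-t\bar s\,\frac{4^\alpha}{2},\qquad
\|Mf\|_\alpha^2=2^{\alpha-1}+|t|^2\Bigl(\frac{3^\alpha}{8}+\frac{4^\alpha}{4}\Bigr)+|s|^2\,\frac{6^\alpha}{2}.
\]
Contrary to what you wrote, the $z_1z_2$ term does contribute, through its pairing with $-tM^2$. Set $T=|t|$, $u=2^\alpha|s|$ and divide by $2^{\alpha-1}$. You would need
\[
T(1+u)>1+c_\alpha T^2+(3/4)^\alpha u^2,\qquad c_\alpha=(3/2)^\alpha/4+2^\alpha/2.
\]
Maximizing this concave quadratic shows it requires $(3/4)^\alpha>(9/8)^\alpha+2(3/2)^\alpha-1$, which is false for every $\alpha\ge 0$. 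So this ansatz never works, even before imposing zero-freeness. For example, $t=2$, $s=1$ gives $(1-z_1)(1-z_2)$, which is nonvanishing on $\D^2$, but the ratio is $4/5$ at $\alpha=0$. The rest of your plan (choosing parameters, a Schur--Cohn test, monotonicity in $\alpha$) remains speculative.

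\textbf{The missing idea is to use the growth of the weights rather than fight it.} The paper puts no linear term in $f$ at all. It takes $f_N=1+\lambda(z_1z_2)^Nh$ with:
\begin{itemize}
\item $h$ the fixed symmetric polynomial \eqref{eq:h}, with $\|Mh\|_0^2=11$ and $\langle h,Mh\rangle_0=-12$; $h$ itself need not be zero-free;
\item $0<\lambda<1/19$, so that $|\lambda h_N|<1$ on $\overline{\D}^2$ and $f_N$ is zero-free for trivial reasons.
\end{itemize}
Orthogonality of distinct monomials kills every cross term between $1$ and the shifted block. This gives $\langle f_N,Mf_N\rangle_\alpha=\lambda^2\langle h_N,Mh_N\rangle_\alpha$ and $\|Mf_N\|_\alpha^2=2^{\alpha-1}+\lambda^2\|Mh_N\|_\alpha^2$. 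Every weight on the shifted block is $N^{2\alpha}(1+o(1))$, so the block terms behave like $-12\lambda^2N^{2\alpha}$ and $11\lambda^2N^{2\alpha}$. For $\alpha>0$ these swamp the fixed $2^{\alpha-1}$.

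Thus the weight growth you identify as the main difficulty is exactly what decouples zero-freeness (small $\lambda$) from the inequality (large $N$). This is also why $N$ must depend on $\alpha$, and why the construction collapses at $\alpha=0$.
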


In one variable, the situation is different: for $\alpha \geq 0$, all opas to $1/f$, with $f \in D_{\alpha}(\D)$ and $f(0)\neq0$, have no zeros in the closed unit disc \cite{BKSLS}. For $\alpha < 0$ the analogous assertion fails. We will give a construction of a zero-free polynomial on the closed unit disc whose linear opa vanishes in $\D$.

The function used in \cite{BKS} is symmetric; this is exploited to find the zero of its linear opa on the diagonal. In that case, numerically, this zero is approximately (0.97704, 0.97704). In the article, it was proposed to optimize the distance from the zero of the opa to the origin. Hartz provided a simpler proof, which was included later, and Geronimo and Woerdeman \cite{Geronimo-Woerdeman} improved the constant from $0.97704$ to approximately $0.97577$ by studying a certain family of counterexamples. Our results also provide a linear opa with a zero at a point on the diagonal, but in this case, the zeros approach ($-11/12$, $-11/12$). For $\alpha > 0$, we can get arbitrarily close to such a point but the functions used diverge as alpha decreases to 0. We do construct a function in Hardy whose linear opa has a zero at (-11/12, -11/12) but this function is not zero-free. It has been suggested by A. Dayan \cite{Dayan} that, without imposing symmetry, the zeros may approach the origin arbitrarily closely. Our results do not answer this question by Dayan.

The paper is organized as follows. In Section 2, we prove Theorem \ref{Resultado-Principal}. After establishing a sufficient condition for a linear opa to have a zero in the bidisc, we construct a family of zero-free polynomials satisfying this condition for every positive value of the parameter. The key step is to understand how the weighted norms and inner products behave when a fixed polynomial is multiplied by monomials of increasing degree. In Section 3, we extend the construction of these counterexamples to polydiscs of higher dimensions and anisotropic Dirichlet-type spaces. We also provide the one-variable construction for negative parameters that is needed to complete the final corollary. In our opinion, this completely settles the Weak Shanks questions in all these spaces.

\section{Polynomial counterexamples for every positive parameter}

The proof requires a set of previous calculations. We start with a sufficient condition for the existence of a zero of the linear opa inside the domain.

\begin{lemma}\label{Lema_H_alfa}
Let $f \in D_\alpha(\D^2)$ be a symmetric, zero-free function with $f(0,0)=1$. Assume that
\begin{equation}\label{eq:H_alfa}
H_\alpha(f):=\|Mf\|_\alpha^2+\Re\langle f,Mf\rangle_\alpha<0.
\end{equation}
Then the opa $p_1$ has a zero in $\D^2$.
\end{lemma}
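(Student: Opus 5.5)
Since $f$ is symmetric, the discussion preceding \eqref{p=a+bM} gives $p_1(z)=a+bM(z)$ for some $a,b\in\C$. The plan is to find $a$ and $b$ from the normal equations, locate the zero of $p_1$ on the diagonal $z_1=z_2=w$, and then use \eqref{eq:H_alfa} to show $|w|<1$.

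\emph{The normal equations.} Since $p_1f$ is the orthogonal projection of $1$ onto $f\mathcal P_1$, the residual $p_1f-1$ is orthogonal to $f$, $z_1f$ and $z_2f$, and in particular to $f$ and $Mf$. This gives
\begin{align*}
a\|f\|_\alpha^2+b\langle Mf,f\rangle_\alpha&=\langle 1,f\rangle_\alpha,\\
a\langle f,Mf\rangle_\alpha+b\|Mf\|_\alpha^2&=\langle 1,Mf\rangle_\alpha.
\end{align*}
The constant $1$ has only the $(0,0)$ coefficient, which has weight $1$. Hence $\langle 1,f\rangle_\alpha=\overline{f(0,0)}=1$. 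Since $Mf$ has no constant term, $\langle 1,Mf\rangle_\alpha=0$. The second equation therefore reads
\[
a\langle f,Mf\rangle_\alpha+b\|Mf\|_\alpha^2=0 .
\]

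\emph{Showing $b\neq0$.} Suppose $b=0$. The first equation gives $a=1/\|f\|_\alpha^2\neq 0$, and then the second forces $\langle f,Mf\rangle_\alpha=0$. But \eqref{eq:H_alfa} together with $\|Mf\|_\alpha^2\ge 0$ gives $\Re\langle f,Mf\rangle_\alpha<0$, so $\langle f,Mf\rangle_\alpha\neq0$, a contradiction. Hence $b\neq0$, and since $\langle f,Mf\rangle_\alpha\neq 0$ the second equation can be solved:
\[
a=-\,b\,\frac{\|Mf\|_\alpha^2}{\langle f,Mf\rangle_\alpha}.
\]

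\emph{Locating the zero.} Restricted to the diagonal $z_1=z_2=w$ we have $M=w$, so $p_1(w,w)=a+bw$ vanishes at
\[
w_0=-\frac ab=\frac{\|Mf\|_\alpha^2}{\langle f,Mf\rangle_\alpha}.
\]
It remains to check $|w_0|<1$, i.e. $\|Mf\|_\alpha^2<|\langle f,Mf\rangle_\alpha|$. This follows from \eqref{eq:H_alfa}:
\[
\|Mf\|_\alpha^2<-\Re\langle f,Mf\rangle_\alpha\le|\langle f,Mf\rangle_\alpha| .
\]
Thus $(w_0,w_0)\in\D^2$ is a zero of $p_1$.

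The only delicate points are the evaluation of $\langle 1,f\rangle_\alpha$ and $\langle 1,Mf\rangle_\alpha$, and ruling out the degenerate case $b=0$; both are handled above. The zero-free hypothesis is used only to guarantee $f\not\equiv0$, so that the opa exists and is unique.
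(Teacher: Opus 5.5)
Your proof is correct and follows essentially the same route as the paper. Orthogonality of the residual to $Mf$, together with $\langle 1,Mf\rangle_\alpha=0$, gives $-a/b=\|Mf\|_\alpha^2/\langle f,Mf\rangle_\alpha$, which has modulus below $1$ by \eqref{eq:H_alfa} and so yields a zero on the diagonal. The only difference is the degenerate case: the paper notes that $a=0$ forces $p_1(0,0)=0$, whereas you use the first normal equation with $\langle 1,f\rangle_\alpha=1$ to rule out $b=0$, and both are fine.
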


\begin{proof}
Since $f$ is symmetric and the norm \eqref{eq:norma-alfa} is invariant under the interchange of the variables, the minimizer is also symmetric. As observed above \eqref{p=a+bM}, the linear optimal polynomial approximant can be written
as

\[
p_1=a+bM, \phantom{a,b}a,b\in \C
\]
We may suppose that $a \neq 0$, since otherwise $p_1(0, 0) = 0$ and there is nothing to prove.

Since $p_1f$ is the orthogonal projection of $1$ onto $\mathcal P_1f$, the vector $1 - p_1f$ must be orthogonal to every element of $\mathcal P_1f$, including $f$ and $Mf$. In terms of the inner product, the orthogonality to $Mf$ can be written as
\[
\langle 1 - p_1f, Mf\rangle_\alpha = 0.
\]
Consequently,
\[
\langle 1,Mf\rangle_\alpha-a\langle f,Mf\rangle_\alpha-b\|Mf\|_\alpha^2=0.
\]
Now $M$ has no constant term, and neither does $Mf$. Since the constant function $1$ is orthogonal to all non-constant monomials in \eqref{eq:norma-alfa},
\[
\langle 1,Mf\rangle_\alpha=0.
\]
Thus we can solve for $b/a$, obtaining
\begin{equation}\label{eq:b/a}
\frac{b}{a}=-\frac{\langle f,Mf\rangle_\alpha}{\|Mf\|_\alpha^2}.
\end{equation}
By \eqref{eq:H_alfa},
\[
\Re\langle f,Mf\rangle_\alpha<-\|Mf\|_\alpha^2.
\]
Consequently,
\[
|\langle f,Mf\rangle_\alpha|\geq -\Re\langle f,Mf\rangle_\alpha>\|Mf\|_\alpha^2.
\]
Using \eqref{eq:b/a}, we get $|b/a|>1$. Hence $\left(\frac{-a}{b}, \frac{-a}{b} \right)$ is a point of $\D^2$. We are done after noticing that $p_1(\frac{-a}{b},  \frac{-a}{b})=0$.
\end{proof}

We now build a polynomial for which the quantity $H_0$ is negative. Define
\begin{equation} \label{eq:h}
\begin{aligned}
h(z_1,z_2)= {}& z_1+z_2 +z_1^2+z_2^2-3z_1z_2
-2(z_1^3+z_2^3) \\ &+2(z_1^2z_2+z_1z_2^2)
+z_1^3z_2+z_1z_2^3-2z_1^2z_2^2.
\end{aligned}
\end{equation}

\begin{lemma}\label{Lema_H0}
For the polynomial $h$ in \eqref{eq:h},
\[
\|Mh\|_0^2=11,
\qquad
\langle h,Mh\rangle_0=-12.
\]
In particular,
\[
H_0(h)=-1.
\]
\end{lemma}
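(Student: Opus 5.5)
Since $\alpha=0$, the norm \eqref{eq:norma-alfa} is just the $\ell^2$ norm of the coefficient array, and the inner product is the Euclidean pairing of coefficients. The plan is therefore a direct coefficient computation. First list the twelve nonzero coefficients $c_{i,j}$ of $h$ from \eqref{eq:h}, grouped by total degree:
\begin{align*}
&\text{degree } 1:\ c_{1,0}=c_{0,1}=1,\\
&\text{degree } 2:\ c_{2,0}=c_{0,2}=1,\quad c_{1,1}=-3,\\
&\text{degree } 3:\ c_{3,0}=c_{0,3}=-2,\quad c_{2,1}=c_{1,2}=2,\\
&\text{degree } 4:\ c_{3,1}=c_{1,3}=1,\quad c_{2,2}=-2.
\end{align*}
Next, write $Mh=\tfrac12 (z_1+z_2)h$. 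The coefficient $d_{i,j}$ of $(z_1+z_2)h$ at $z_1^iz_2^j$ is $c_{i-1,j}+c_{i,j-1}$. By the symmetry of $h$, only half of these need to be computed.

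Carrying this out degree by degree, I expect the following:
\begin{itemize}
\item In degree $2$, the coefficients are $(1,2,1)$.
\item In degree $3$, they are $(1,-2,-2,1)$.
\item In degree $4$, they are $(-2,0,4,0,-2)$.
\item In degree $5$, they are $(0,1,-1,-1,1,0)$.
\item In degree $6$, everything cancels: for example $d_{3,3}=c_{2,3}+c_{3,2}=0$ and $d_{4,2}=c_{3,2}+c_{4,1}=0$.
\end{itemize}
Summing the squares gives $6+10+24+4=44$, so $\|Mh\|_0^2=44/4=11$.

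For the inner product, pair $c_{i,j}$ with $d_{i,j}$ in each degree; degree $1$ contributes nothing because $(z_1+z_2)h$ has no linear terms. I expect:
\begin{itemize}
\item degree $2$ gives $1-6+1=-4$;
\item degree $3$ gives $-2-4-4-2=-12$;
\item degree $4$ gives $(-2)\cdot 4=-8$, the other terms vanishing.
\end{itemize}
The total is $-24$, hence $\langle h,Mh\rangle_0=-24/2=-12$, which is real. Then $H_0(h)=11+\Re(-12)=-1$ follows immediately from the definition in \eqref{eq:H_alfa}.

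There is no conceptual obstacle here. The only risk is bookkeeping: correctly collecting the contributions $c_{i-1,j}+c_{i,j-1}$, in particular the cancellations in degrees $4$, $5$ and $6$, which is where a sign slip would change the answer. To guard against this, I would use the symmetry $d_{i,j}=d_{j,i}$ as a consistency check. I would also double-check by evaluating $\sum_{i,j} d_{i,j}=2h(1,1)$ and $\sum_{i,j} c_{i,j}=h(1,1)=0$.
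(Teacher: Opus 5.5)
Your computation is correct and is essentially the paper's proof: organizing $(z_1+z_2)h$ by total degree is the same as the paper's splitting into homogeneous parts, and your per-degree totals ($6/4,\,10/4,\,24/4,\,4/4$ and $-4/2,\,-12/2,\,-8/2$) reproduce its values $\tfrac32,\tfrac52,6,1$ and $-2,-6,-4$. Two small slips: in your sanity check $h(1,1)=1$, not $0$ (which agrees with your own $\sum d_{i,j}=2$), and the degree-$6$ bullet is vacuous, since $\deg h=4$ forces $\deg (z_1+z_2)h\le 5$.
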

\begin{proof}
Write \(h=h_1+h_2+h_3+h_4\), where \(h_j\) denotes the homogeneous part of \(h\) of degree \(j\). Since multiplication by \(M\) raises degree by one, and monomials of different degrees are orthogonal, we have
\[
\langle h,Mh\rangle_0
=
\sum_{j=1}^{3}\langle h_{j+1},Mh_j\rangle_0,
\qquad
\|Mh\|_0^2
=
\sum_{j=1}^{4}\|Mh_j\|_0^2.
\]
The following identities are obtained by comparing coefficients after multiplying each homogeneous component by \(M\):
\[
\langle h_2,Mh_1\rangle_0=-2,\qquad
\langle h_3,Mh_2\rangle_0=-6,\qquad
\langle h_4,Mh_3\rangle_0=-4,
\]
and
\[
\|Mh_1\|_0^2=\frac32,\qquad
\|Mh_2\|_0^2=\frac52,\qquad
\|Mh_3\|_0^2=6,\qquad
\|Mh_4\|_0^2=1.
\]
Thus
\[
\langle h,Mh\rangle_0=-12,
\qquad
\|Mh\|_0^2=11.
\]
Therefore
\[
H_0(h)=\|Mh\|_0^2+\langle h,Mh\rangle_0=11-12=-1.
\]
\end{proof}

The linear optimal approximant associated with $h$ in the Hardy norm has a zero in the bidisc. However, this is not yet a counterexample to Shanks' conjecture, since $h$ itself is not required to be zero-free. The role of $h$ is only to provide a finite negative block. The following Lemma explains why the same negative behaviour persists, after scaling, in $D_\alpha(\D^2)$ when the block is shifted to high degrees.

For $N\in\N$, define
\begin{equation} \label{eq:h_N}
    h_N(z_1,z_2)=(z_1z_2)^N h(z_1,z_2).
\end{equation}

The only effect of this shift is to change the degrees of each monomial, leaving  the coefficients themselves unchanged.

\begin{lemma}\label{Lema_h_N}
For $N\in \N$, set
\[
h_N(z_1,z_2)=(z_1z_2)^Nh(z_1,z_2).
\]
Then, for every $\alpha>0$,
\[
N^{-2\alpha}H_\alpha(h_N)\longrightarrow H_0(h)=-1, \phantom{a} \text{as} \phantom{a} N\to\infty.
\]
\end{lemma}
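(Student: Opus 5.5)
Since $h_N$, $Mh_N$ and all the quantities involved are finite sums of monomials, I will prove the lemma term by term. Write $h=\sum_{j\in J} c_j z^j$ with $J\subset\N_0^2$ a fixed finite set. Then $h_N=\sum_{j\in J}c_j z^{j+(N,N)}$ and $Mh_N=(z_1z_2)^N Mh=\sum_{k\in K} d_k z^{k+(N,N)}$, where $Mh=\sum_{k\in K}d_kz^k$ for another fixed finite set $K$. The coefficients $c_j$, $d_k$ do not depend on $N$. This is the observation made just before the statement: the shift only relabels degrees.

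Next I expand both parts of $H_\alpha(h_N)$ directly from \eqref{eq:norma-alfa}:
\[
\|Mh_N\|_\alpha^2=\sum_{k\in K}|d_k|^2\,(k_1+N+1)^\alpha(k_2+N+1)^\alpha,
\]
\[
\langle h_N,Mh_N\rangle_\alpha=\sum_{j\in J\cap K}c_j\overline{d_j}\,(j_1+N+1)^\alpha(j_2+N+1)^\alpha .
\]
The second formula holds because the monomial $z^{j+(N,N)}$ in $h_N$ pairs only with the monomial $z^{k+(N,N)}$ in $Mh_N$ for which $k=j$.

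For each fixed multi-index $m$, the weight satisfies
\[
N^{-2\alpha}(m_1+N+1)^\alpha(m_2+N+1)^\alpha=\Bigl(1+\tfrac{m_1+1}{N}\Bigr)^\alpha\Bigl(1+\tfrac{m_2+1}{N}\Bigr)^\alpha\longrightarrow 1
\]
as $N\to\infty$. The sums are finite and the coefficients are fixed, so
\[
N^{-2\alpha}\|Mh_N\|_\alpha^2\to\sum_k|d_k|^2=\|Mh\|_{-\infty}^2,
\]
where the right-hand side is the plain coefficient $\ell^2$ sum. In the same way, $N^{-2\alpha}\langle h_N,Mh_N\rangle_\alpha$ converges to the unweighted pairing $\sum_j c_j\overline{d_j}$.

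The point requiring care is that this limit is the unweighted $\ell^2$ pairing, whereas Lemma \ref{Lema_H0} uses the Hardy norm $\alpha=0$. These agree, because for $\alpha=0$ every weight $\prod_r (j_r+1)^0$ equals $1$. Hence the limits are exactly $\|Mh\|_0^2=11$ and $\langle h,Mh\rangle_0=-12$. Since $h$ has real coefficients, taking real parts gives
\[
N^{-2\alpha}H_\alpha(h_N)\to 11-12=-1=H_0(h).
\]
The main obstacle is only this bookkeeping: confirming that the shift preserves the coefficient pairing and that the limiting weights are identically $1$. Everything else is a finite limit, so no uniformity argument is needed.
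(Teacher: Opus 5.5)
Your proof is correct and is essentially the paper's argument: you write $H_\alpha(h_N)$ as a finite sum over the shifted monomials of $h$ and $Mh$ with $N$-independent coefficients, divide by $N^{2\alpha}$, let each weight ratio tend to $1$, and identify the unweighted limit with $\|Mh\|_0^2+\Re\langle h,Mh\rangle_0=-1$ from Lemma~\ref{Lema_H0}. Your cross term $c_j\overline{d_j}$ is in fact written more carefully than the paper's displayed formula, which omits the conjugate and the real part.
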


\begin{proof}
Write
\[
h=\sum_{j,k}a_{j,k}z_1^jz_2^k,
\qquad
Mh=\sum_{j,k}b_{j,k}z_1^jz_2^k.
\]
Then
\[
h_N=\sum_{j,k}a_{j,k}z_1^{j+N}z_2^{k+N}.
\]
Since multiplication by $(z_1z_2)^N$ commutes with multiplication by $M$,
\[
Mh_N=(z_1z_2)^NMh
=\sum_{j,k}b_{j,k}z_1^{j+N}z_2^{k+N}.
\]
Therefore, the only appearance of N in the computation of $H_\alpha(h_N)$ is on the weights
\begin{equation}\label{eq:H_alfa-sum}
H_\alpha(h_N)=
\sum_{j,k}
\left(|b_{j,k}|^2+a_{j,k}b_{j,k}\right)
(j+N+1)^\alpha(k+N+1)^\alpha.
\end{equation}
The sum has finitely many terms. Hence, for each of these finitely many pairs of values ($j,k$), we obtain the same limit behavior as $N \to \infty$, namely
\[
\frac{(j+N+1)^\alpha(k+N+1)^\alpha}{N^{2\alpha}}
=\left(1+\frac{j+1}{N}\right)^\alpha
\left(1+\frac{k+1}{N}\right)^\alpha
\longrightarrow 1.
\]
Dividing \eqref{eq:H_alfa-sum} by $N^{2\alpha}$ and passing to the limit as $N \to \infty$ gives
\[
N^{-2\alpha}H_\alpha(h_N)
\longrightarrow
\sum_{j,k}
\left(|b_{j,k}|^2+\Re(a_{j,k}\overline{b_{j,k}})\right).
\]
From Lemma \ref{Lema_H0}, the last sum is precisely
\[
\|Mh\|_0^2+\Re\langle h,Mh\rangle_0=H_0(h) = -1.
\]
\end{proof}

We are finally ready for the proof of Theorem \ref{Resultado-Principal}.

\begin{proof}[Proof of Theorem 1.1]
Denote by $\|\cdot\|_\infty$ the supremum norm on $\D^2$ and notice that for $h$, and $h_N$ as in \eqref{eq:h_N}:
\begin{equation*}
    \|h\|_\infty = \|h_N\|_\infty \leq 19,
\end{equation*}
since the sum of absolute values of the coefficients of $h$ is $19$.\\
Fix $\alpha>0$ and choose a number $\lambda$ with
\[
 0<\lambda<\frac1{19}.
\]
For $N\in\N$, set
\[
 f_N(z_1,z_2)=1+\lambda h_N(z_1,z_2)
 =1+\lambda(z_1z_2)^Nh(z_1,z_2).
\]
 Indeed, by the reverse triangle inequality,
\[
 |f_N(z_1,z_2)|
 =|1+\lambda h_N(z_1,z_2)|
 \geq
 1-|\lambda h_N(z_1,z_2)|
 >0.
\]
By Lemma \ref{Lema_H_alfa} it remains to be shown that $H_\alpha(f_N) < 0$ for $N$ sufficiently large.

Since
\[
Mf_N=M+\lambda Mh_N,
\]
and $M$ and $Mh_N$ have no common monomials for $N\geq 1$, we have
\[
\|Mf_N\|_\alpha^2=\|M\|_\alpha^2+\lambda^2\|Mh_N\|_\alpha^2.
\]
Similarly, all mixed terms in $\langle f_N,Mf_N\rangle_\alpha$ vanish except the one involving $h_N$ and $Mh_N$. Indeed,
\[
\langle 1,M\rangle_\alpha=0,
\qquad
\langle 1,Mh_N\rangle_\alpha=0,
\qquad
\langle h_N,M\rangle_\alpha=0,
\]
again by the mutual orthogonality between distinct monomials. Hence
\[
\langle f_N,Mf_N\rangle_\alpha
=\lambda^2\langle h_N,Mh_N\rangle_\alpha.
\]
It follows that
\[
H_\alpha(f_N)=\|M\|_\alpha^2+\lambda^2H_\alpha(h_N).
\]
Finally,
\[
\|M\|_\alpha^2
=\left\|\frac{z_1+z_2}{2}\right\|_\alpha^2
=\frac14\|z_1\|_\alpha^2+
\frac14\|z_2\|_\alpha^2
=\frac14 2^\alpha+\frac14 2^\alpha
=2^{\alpha-1}.
\]
For $N$ sufficiently large, this yields
\begin{equation}\label{eq:H_alfa-final}
H_\alpha(f_N)=2^{\alpha-1}+\lambda^2H_\alpha(h_N)< 2^{\alpha - 1} - \frac{\lambda^2}{2}N^{2\alpha},
\end{equation}
where the last inequality follows from Lemma \ref{Lema_h_N}. Thus, for $N$ sufficiently large, \eqref{eq:H_alfa-final} is negative. Lemma \ref{Lema_H_alfa} then shows that the linear optimal polynomial approximant to $1/f_N$ has a zero in $\D^2$. This concludes the proof.
\end{proof}

\section{Further Remarks}
The proof above is not specific to the bidisc or to the power weights defining the isotropic Dirichlet-type scale.
The essential mechanism is the following. A fixed polynomial block has a negative Hardy contribution, and a
monomial translation moves this block to a region where the weights are almost constant. If their common size
tends to infinity, the negative contribution dominates the fixed term coming from the constant coefficient. We
record three consequences of this observation.
\subsection{Higher-dimensional polydiscs}
For $n\geq 2$, let $D_\alpha(\D^n)$ be the isotropic Dirichlet-type space over the polydisc. The monomial weights on these spaces are
\begin{equation*}
    \omega_j = \prod_{r=1}^n(j_r+1)^\alpha, \qquad j=(j_1, \dots, j_n)\in \N_0^n.
\end{equation*}
Regard  the polynomials $h$ and $h_N$ from \eqref{eq:h_N} as polynomials in the first two variables. The remaining variables play no role in the norm calculation, and hence
\begin{equation*}
    N^{-2\alpha}H_\alpha(h_N) \to H_0(h)=-1 \text{ as } N\to\infty
\end{equation*}

Thus $H_\alpha(h_N) \to -\infty$ as $N \to \infty$ whenever $\alpha>0$. Moreover, for $0<\lambda< 1/19$, the polynomial $f_N= 1 + \lambda h_N$ is zero-free in $\overline{\D^n}$. Since $f_N$ is independent of $z_3, \dots, z_n$, the corresponding coefficients of its opa vanish. The problem therefore reduces to the two-variable calculation, and the opa has a zero of the form $(\zeta,\zeta, 0, \dots,0)\in\D^n$. We have consequently proved the following extension of Theorem \ref{Resultado-Principal}.

\begin{proposition}
For every $n\geq 2$ and every $\alpha>0$, there exists a zero-free polynomial $f$ on $\overline{\D}^n$ such that the linear opa to $1/f$ has a zero in $\D^n$.
\end{proposition}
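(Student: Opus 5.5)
The plan is to reduce the statement to the two-variable case already settled in the proof of Theorem \ref{Resultado-Principal}, by embedding the bidisc example into $\D^n$ as a function that does not depend on $z_3,\dots,z_n$. Fix $n\geq 2$ and $\alpha>0$. Take $0<\lambda<1/19$ and $N$ large, exactly as in the proof of Theorem \ref{Resultado-Principal}. Set
\[
f_N(z_1,\dots,z_n)=1+\lambda (z_1z_2)^N h(z_1,z_2),
\]
with $h$ as in \eqref{eq:h}. Zero-freeness on $\overline{\D}^n$ follows verbatim from the bound $\|h_N\|_\infty\le 19$ and the reverse triangle inequality, since the supremum over $\overline{\D}^n$ is the same as over $\overline{\D}^2$.

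Next I would show that the linear opa to $1/f_N$ involves only $1,z_1,z_2$. Write a general $p\in\mathcal P_1$ as $p=q+s$ with $q=c_0+c_1z_1+c_2z_2$ and $s=\sum_{r\ge 3}c_rz_r$. Every monomial of $sf_N$ contains some $z_r$ with $r\ge3$. Every monomial of $qf_N-1$ contains none of them. Distinct monomials are orthogonal for the weights $\omega_j$, so
\[
\|pf_N-1\|_{\alpha,n}^2=\|qf_N-1\|_{\alpha,n}^2+\|sf_N\|_{\alpha,n}^2 .
\]
The second term vanishes only when $s=0$, because $f_N\not\equiv0$. Hence the minimizer has $c_r=0$ for all $r\ge3$. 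This orthogonal splitting is the one step needing care, and I expect it to be the main point to write out properly; the rest is bookkeeping.

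It remains to identify the problem for $q$ with the bidisc problem. For monomials with $j_3=\dots=j_n=0$, the weight is $\omega_j=(j_1+1)^\alpha(j_2+1)^\alpha\cdot 1^{\alpha(n-2)}$. This coincides with the weight of $D_\alpha(\D^2)$, so $\|qf_N-1\|_{\alpha,n}=\|qf_N-1\|_{\alpha,2}$. Therefore the linear opa in $D_\alpha(\D^n)$ equals the linear opa of $f_N$ viewed in $D_\alpha(\D^2)$.

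By the proof of Theorem \ref{Resultado-Principal}, $H_\alpha(f_N)<0$ for $N$ large; this uses \eqref{eq:H_alfa-final} and Lemma \ref{Lema_h_N}. Also $f_N$ is symmetric in $(z_1,z_2)$ with $f_N(0,0)=1$. So Lemma \ref{Lema_H_alfa} gives $p_1=a+bM$ with $|b/a|>1$, vanishing at $(\zeta,\zeta)$ with $\zeta=-a/b\in\D$. Consequently the $n$-variable opa vanishes at $(\zeta,\zeta,0,\dots,0)\in\D^n$.
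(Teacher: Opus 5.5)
Your proposal is correct and follows the paper's own argument: the same $f_N=1+\lambda(z_1z_2)^N h(z_1,z_2)$ with $0<\lambda<1/19$, the reduction to the bidisc computation of Theorem~\ref{Resultado-Principal} because $f_N$ does not depend on $z_3,\dots,z_n$, and the resulting zero at $(\zeta,\zeta,0,\dots,0)\in\D^n$. Your orthogonal splitting $\|pf_N-1\|_{\alpha,n}^2=\|qf_N-1\|_{\alpha,n}^2+\|sf_N\|_{\alpha,n}^2$, together with the check that the weights with $j_3=\dots=j_n=0$ coincide with those of $D_\alpha(\D^2)$, makes explicit a step the paper only asserts: that the coefficients of $z_3,\dots,z_n$ in the opa vanish.
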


\subsection{One variable and negative parameters.}

\begin{lemma} \label{Lema_contraejemplo_negativo}
    For every $\alpha < 0$, there exists a zero-free polynomial $g$ on $\overline{\D}$ such that its linear opa has a zero in $\D$.
\end{lemma}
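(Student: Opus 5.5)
The plan mirrors Lemma \ref{Lema_H_alfa}, now in one variable. Write the linear opa to $1/g$ as $p_1=a+bz$, with $a\neq 0$; if $a=0$ the opa vanishes at $0$ and we are done. The vector $1-p_1g$ is orthogonal to $zg$, and $\langle 1,zg\rangle_\alpha=0$. Exactly as in \eqref{eq:b/a}, this gives
\[
\frac ba=-\frac{\langle g,zg\rangle_\alpha}{\|zg\|_\alpha^2}.
\]
So $p_1$ vanishes at $-a/b$, and this point lies in $\D$ as soon as $|\langle g,zg\rangle_\alpha|>\|zg\|_\alpha^2$. A sufficient condition is the one-variable analogue of \eqref{eq:H_alfa}:
\[
H_\alpha(g)=\|zg\|_\alpha^2+\Re\langle g,zg\rangle_\alpha<0.
\]
Writing $g=\sum_j c_jz^j$, this quantity is
\[
H_\alpha(g)=\sum_j\big(|c_j|^2+\Re(c_{j+1}\overline{c_j})\big)(j+2)^\alpha .
\]

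The mechanism now has to run in the opposite direction from Section 2. For $\alpha<0$ the weights $(j+2)^\alpha$ decrease in $j$. Shifting a negative block to high degree therefore makes it negligible against the constant term, so we cannot copy the proof of Theorem \ref{Resultado-Principal}. Instead, I would exploit the decay directly: put the dominant mass of $g$ in low degrees, where the cross term $\Re(c_{1}\overline{c_0})2^\alpha$ carries the large weight. The diagonal terms $|c_j|^2(j+2)^\alpha$ for $j\ge1$ carry the smaller weights $3^\alpha,4^\alpha,\dots$.

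A first test is $g=1-tz$ with $0<t<1$, which is zero-free on $\overline{\D}$. Then
\[
2^{-\alpha}H_\alpha(g)=1-t+rt^2,\qquad r=(3/2)^\alpha\in(0,1).
\]
This is positive for every admissible $t$, since at $t=1$ it equals $r>0$, so two terms are not enough. I would therefore pass to a quadratic $g=1-tz+dz^2$ with real $t,d$, or more systematically to $g=(1-z/\rho)^m$ with $\rho>1$, which is automatically zero-free on $\overline{\D}$. The idea is that the additional cross terms $\Re(c_{j+1}\overline{c_j})(j+2)^\alpha$ are all negative when consecutive coefficients alternate in sign. They add up to a negative contribution that can beat the diagonal ones once $\rho$ is close to $1$ and $m$ is chosen appropriately in terms of $\alpha$.

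The main obstacle is this last step: producing, for every $\alpha<0$ and not only for very negative $\alpha$, an explicit zero-free $g$ with $H_\alpha(g)<0$. The difficulty is that as $\alpha\to0^-$ we approach the Hardy case, where \cite{BKSLS} forbids such examples. The margin must therefore come from the factor $(j+2)^\alpha$ gaining strictly from one index to the next. Concretely, I would look at the binomial coefficients of $(1-z/\rho)^m$ with $\rho\downarrow1$. At $\rho=1$ one gets $H_\alpha=\sum_j c_j(c_j+c_{j+1})(j+2)^\alpha$, and the Hardy part of this sum vanishes identically by summation by parts. The remainder is governed by the differences $(j+2)^\alpha-(j+3)^\alpha>0$. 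I would try to choose the sign pattern, for instance a truncated alternating sum $\sum_{j\le m}(-z)^j$ suitably perturbed, so that this remainder is strictly negative. A final perturbation pushing the zeros off $\partial\D$ then keeps $H_\alpha<0$ by continuity.
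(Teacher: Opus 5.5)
Your reduction is right and matches the paper's. The zero of $p_1=a+bz$ is $-a/b$ with $|a/b|=\|zg\|_\alpha^2/|\langle g,zg\rangle_\alpha|$, so $H_\alpha(g)<0$ suffices; after a rotation $g(z)\mapsto g(e^{i\theta}z)$ it is also necessary. But the proposal stops exactly where the content of the lemma lies: no polynomial is exhibited and no inequality is checked. The heuristic you give for that step is also incorrect. Summation by parts, with $w_j=(j+2)^\alpha$, gives
\[
H_\alpha(g)=\tfrac12|c_0|^2w_0+\tfrac12\sum_{j\ge0}|c_j+c_{j+1}|^2w_j-\tfrac12\sum_{j\ge1}|c_j|^2(w_{j-1}-w_j),
\]
so the Hardy part $H_0(g)=\tfrac12|c_0|^2+\tfrac12\sum_j|c_j+c_{j+1}|^2$ never vanishes for $g\neq0$. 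For the truncated alternating sum $\sum_{j\le m}(-z)^j$ you get $H_\alpha=(m+2)^\alpha>0$, so it fails as it stands. The same formula shows that $H_0$ is positive definite on $\mathcal P_d$. By continuity in $\alpha$ and the rotation remark, for each $d$ there is $\alpha_d<0$ such that no $g\in\mathcal P_d$ works when $\alpha_d<\alpha<0$. So the quadratic route cannot cover all $\alpha<0$, and the degree must tend to infinity as $\alpha\to0^-$. The binomial candidate $(1-z/\rho)^m$ is left unverified.

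The missing idea can be read off the display: the only negative term telescopes. So you want a long run of alternating coefficients of constant modulus, larger than the end coefficients. Take $c_0=1$, $c_j=A(-1)^j$ for $1\le j\le N-1$, and $c_N=(-1)^N$. Then all interior terms $|c_j+c_{j+1}|^2$ vanish and
\[
H_\alpha(g)=(1-A)\,2^\alpha+(A^2-A)(N+1)^\alpha+(N+2)^\alpha .
\]
Because $\alpha<0$, this is negative for large $N$ as soon as $A>1$; your alternating sum is the borderline case $A=1$, where the leading term cancels. The paper takes $A=2$, namely $f_N(z)=(1+z)\sum_{k=0}^{N-1}z^k=1+2z+\dots+2z^{N-1}+z^N$ (your sign convention corresponds to $f_N(-z)$). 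For it, $\|zf_N\|_\alpha^2-\langle zf_N,f_N\rangle_\alpha=-2^\alpha+2(N+1)^\alpha+(N+2)^\alpha$. All zeros of $f_N$ lie on $\partial\D$, so the dilation $g(z)=f_N(rz)$ with $r$ close to $1$ finishes the proof by continuity. That dilation is the one step your plan already contained.
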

\begin{proof}
    Let $\alpha <0$. For $N \geq 2$, consider the polynomial
    \begin{equation*}
        f_N(z) = \frac{(1-z^N)(1+z)}{1-z} = (1+z)\sum_{k=0}^{N-1}z^k = 1 + 2\sum_{k=1}^{N-1}z^k + z^N.
    \end{equation*}
    It is clear that all its zeros lie on the unit circle. Let $z_0$ be the zero of its linear opa $p_N$. We have
    \begin{equation*}
    \begin{aligned}
        |z_0| &= \frac{\|zf_N\|_\alpha^2}{|\langle zf_N, f_N \rangle_\alpha|} = \frac{2^\alpha + 4\sum_{k=1}^{N-1}(k+2)^\alpha + (N+2)^\alpha}{2\cdot2^\alpha + 4\sum_{k=1}^{N-2}(k+2)^\alpha + 2(N+1)^\alpha} \\
        \\
        &= 1 + \frac{-2^\alpha + 2(N+1)^\alpha + (N+2)^\alpha}{2\cdot2^\alpha + 4\sum_{k=1}^{N-2}(k+2)^\alpha+ 2(N+1)^\alpha}.
    \end{aligned}
    \end{equation*}
    We may therefore fix $N$ sufficiently large so that $|z_0|<1$. Now, to remove the zeros of $f_N$ from the closed disc, set \(g_r(z)=f_N(rz)\) for \(0<r<1\). All zeros of \(g_r\) have modulus \(1/r>1\), so \(g_r\) is zero-free on \(\overline{\mathbb D}\). By continuity of the formula above, its linear opa still has a zero in \(\mathbb D\) for \(r\) sufficiently close to \(1\). Such a polynomial $g = g_r$ proves the lemma.
\end{proof}

\subsection{Anisotropic Dirichlet-type spaces}

Let $\boldsymbol{\alpha} = (\alpha_1, \alpha_2, \dots, \alpha_n) \in \R^n$ and consider the anisotropic weights
\begin{equation*}
    \omega_j = \prod_{r=1}^n(j_r + 1)^{\alpha_r},
\end{equation*}
where $j = (j_1, \dots, j_n) \in \N_0^n$. Assume that at least one component of $\boldsymbol{\alpha}$ is positive, and choose $m$ such that $\alpha_m>0$. We translate the block much faster in the m-th coordinate than in the others. More precisely, set
\begin{equation*}
    \beta_{N,m}=2^N,\phantom{ab} \beta_{N,r} = N \phantom{ab}(r \neq m),
\end{equation*}
and define, with $h$ as in \eqref{eq:h},
\begin{equation*}
    h_N(z) = z^{\beta_N}h(z_1,z_2).
\end{equation*}
Let $u = (u_1, \dots, u_n)$ be one of the finitely many multi-indices occurring in $h, z_1h, \dots, z_nh$. The weight of the corresponding monomial after the translation is
\begin{equation*}
    \omega_{\beta_N + u}=(2^N+u_m+1)^{\alpha_m}\prod_{r\neq m}(N + u_r + 1)^{\alpha_r}.
\end{equation*}
Factoring out the terms that do not depend on $u$, we obtain
\begin{equation*}
\begin{aligned}
    \omega_{\beta_N +u} =& (2^N+1)^{\alpha_m}(N+1)^{\sum_{r \neq m}\alpha_r} \\
    &(1 + \frac{u_m}{2^N+1})^{\alpha_m}\prod_{r \neq m}(1 + \frac{u_r}{N+1})^{\alpha_r}.
\end{aligned}
\end{equation*}

We denote the common factor in the first line by $W_N$. Therefore,
\begin{equation*}
    \frac{\omega_{\beta_N + u}}{W_N} = (1 + \frac{u_m}{2^N+1})^{\alpha_m}\prod_{r \neq m}(1 + \frac{u_r}{N+1})^{\alpha_r} \to 1.
\end{equation*}
Moreover, $W_N \to \infty$. Indeed, the factor $(2^N + 1)^{\alpha_m}$ grows exponentially because $\alpha_m >0$, whereas $(N+1)^{\sum_{r \neq m}\alpha_r}$ is only a polynomial factor. Hence, the former dominates the latter, even when the sum of the remaining exponents is negative. Thus, after dividing by $W_N$, the weights of all the monomials involved in the translated block converge to 1. This is precisely the analog of the term-by-term limit used in Lemma \ref{Lema_h_N}.\\
We therefore have the following extension of Theorem \ref{Resultado-Principal}.
\begin{theorem} \label{Caso_aniso}
Let $n\geq 2$ and let
$\boldsymbol{\alpha}=(\alpha_1,\ldots,\alpha_n)\in\mathbb{R}^n$.
If
\[
   \max_{1\leq r\leq n}\alpha_r>0,
\]
then there exists a zero-free polynomial $f$ on $\overline{\D}^n$ such that the linear opa to $1/f$ has a zero in $\mathbb{D}^n$.
\end{theorem}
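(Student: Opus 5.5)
We plan to prove Theorem \ref{Caso_aniso} by using the perturbation $f_N=1+\lambda h_N$ from the proof of Theorem \ref{Resultado-Principal}, but with the block placed in only two variables. Lemma \ref{Lema_H_alfa} cannot be used as stated, because the anisotropic weights break the symmetry between $z_1$ and $z_2$. So the linear opa is no longer of the form $a+bM$. We will replace that lemma by a direct asymptotic analysis of the normal equations.

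\emph{Setup.} Fix $m$ with $\alpha_m>0$ and some $\ell\neq m$. Put
\[
h_N(z)=z_m^{2^N}z_\ell^{N}\,h(z_m,z_\ell),\qquad f_N=1+\lambda h_N,
\]
with $0<\lambda<1/19$. Then $f_N$ is zero-free on $\overline{\D}^n$ by the same reverse triangle inequality as before. Write the opa as $p=a+\sum_r b_r z_r$. For $r\notin\{m,\ell\}$, every monomial of $z_rf_N$ contains $z_r$, while no monomial of $f_N$, $z_mf_N$, $z_\ell f_N$ or $1$ does. Hence $z_rf_N$ is orthogonal to all of these, the normal equations decouple, and $b_r=0$. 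We are left with a two-variable problem in $(z_m,z_\ell)$ with weights $(j+1)^{\alpha_m}(k+1)^{\alpha_\ell}$. If $a=0$ then $p(0)=0$ and we are done, so assume $a\neq0$.

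\emph{Normal equations.} Orthogonality of $1-pf_N$ to $z_mf_N$ and $z_\ell f_N$, together with $\langle 1,z_if_N\rangle=0$, gives
\[
G_N\,(b_m,b_\ell)^T=-a\,v_N,\qquad (G_N)_{ik}=\langle z_kf_N,z_if_N\rangle,\quad (v_N)_i=\langle f_N,z_if_N\rangle .
\]
As in the proof of Theorem \ref{Resultado-Principal}, disjointness of supports yields
\[
(G_N)_{ik}=\langle z_k,z_i\rangle+\lambda^2\langle z_kh_N,z_ih_N\rangle,\qquad (v_N)_i=\lambda^2\langle h_N,z_ih_N\rangle .
\]
By the weight computation preceding the theorem (with $W_N\to\infty$ and $\omega_{\beta_N+u}/W_N\to1$ term by term), we get
\[
W_N^{-1}G_N\to\lambda^2G_0,\qquad W_N^{-1}v_N\to\lambda^2v_0 .
\]
Here $G_0$ and $v_0$ are the corresponding Hardy quantities for $h$; the fixed term $\langle z_k,z_i\rangle$ is killed by $W_N\to\infty$. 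Since $z_1h$ and $z_2h$ are linearly independent, $G_0$ is positive definite. Therefore
\[
(b_m,b_\ell)/a\;\to\;-G_0^{-1}v_0 .
\]

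\emph{Identifying the limit.} The vector $-G_0^{-1}v_0$ holds the linear coefficients, normalised by the constant term, of the minimizer of $\|q-(c_1z_1+c_2z_2)q\|_0$ with $q=h$. That is the projection of $h$ onto the span of $z_1h,z_2h$ in $H^2(\D^2)$. Since $h$ is symmetric, this minimizer is symmetric, so it equals $(\beta/2)(z_1+z_2)$ with $\beta$ given by the computation of Lemma \ref{Lema_H_alfa}: $\beta=-\langle h,Mh\rangle_0/\|Mh\|_0^2$. By Lemma \ref{Lema_H0}, $\beta=12/11$. Hence $(b_m+b_\ell)/a\to -12/11$ (with the sign conventions above), and in particular $|b_m+b_\ell|>|a|$ for $N$ large.

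\emph{Conclusion.} The point with $z_m=z_\ell=\zeta:=-a/(b_m+b_\ell)$ and all other coordinates $0$ lies in $\D^n$, and $p$ vanishes there. The main step needing care is the uniform convergence of the finitely many entries of $G_N$ and $v_N$, together with the invertibility of $G_0$. Both reduce to the finite term-by-term limits already established, so we expect no genuine obstacle. The one subtle point is remembering that symmetry is unavailable and must be recovered only in the limit.
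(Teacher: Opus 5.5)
Your proof is correct, and it takes a genuinely different, more careful route than the paper.

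\textbf{The paper's route.} The paper stops after the weight computation ($\omega_{\beta_N+u}/W_N\to1$, $W_N\to\infty$), calls this the analogue of Lemma~\ref{Lema_h_N}, and concludes. Implicitly it feeds $H(f_N)<0$ into Lemma~\ref{Lema_H_alfa}. As you observe, that lemma needs the opa to have the form $a+bM$, and this rests on symmetry of both $f$ and the norm. That symmetry fails in general here: the weights are not symmetric when the two exponents differ, and when $m\in\{1,2\}$ the shift $z_m^{2^N}$ also breaks the symmetry of $f_N$. The paper also shifts in all $n$ variables, so the vanishing of the coefficients of $z_r$ for $r\ge3$ is left implicit. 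It does hold, by the same monomial-support argument you give.

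\textbf{Your route.} You confine the block to $(z_m,z_\ell)$, which makes $b_r=0$ immediate. You then replace the scalar criterion $H<0$ by the $2\times2$ normal equations, normalise by $W_N$, and pass to the limit using the invertibility of $G_0$. Symmetry is recovered only in the limit, from the Hardy data of $h$. You finish via $|b_m+b_\ell|>|a|$, which does not require $b_m=b_\ell$.

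\textbf{Comparison.} The paper's version is shorter and keeps a single scalar test. Yours is what actually justifies the reduction to the symmetric computation, and it also locates the zeros asymptotically.

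\textbf{A harmless sign slip.} The limit is $(b_m+b_\ell)/a\to+12/11$, not $-12/11$. Indeed $G_0$ has diagonal entries $\|h\|_0^2=35$ and off-diagonal entries $\langle z_1h,z_2h\rangle_0=-13$, while $v_0=(-12,-12)$. Hence $-G_0^{-1}v_0=(6/11,6/11)$, which agrees with your own $\beta=12/11$. So $\zeta\to-11/12$, as in the paper. Only $|b_m+b_\ell|>|a|$ is used, so the conclusion stands.
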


\begin{corollary}
Let $n \geq 2$ and $\boldsymbol{\alpha} = (\alpha_1, \dots, \alpha_n) \in \R^n$. There exists a zero-free polynomial $f$ on $\overline{\D}^n$ such that its linear opa has a zero in $\D^n$.
\end{corollary}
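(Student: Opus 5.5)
The corollary covers every $\boldsymbol{\alpha}\in\R^n$, so I split according to the sign of $\max_r\alpha_r$.

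\emph{Case $\max_r\alpha_r>0$.} This is exactly Theorem \ref{Caso_aniso}, and there is nothing further to do.

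\emph{Case $\alpha_r\le 0$ for every $r$.} Here I reduce to the one-variable construction of Lemma \ref{Lema_contraejemplo_negativo}, placed in a single coordinate. Two subcases arise.
\begin{itemize}
\item If some $\alpha_m<0$, take the one-variable polynomial $g$ that Lemma \ref{Lema_contraejemplo_negativo} gives for the parameter $\alpha_m$, and set $f(z)=g(z_m)$. This $f$ is zero-free on $\overline{\D}^n$ because $g$ is zero-free on $\overline{\D}$.
\item If all $\alpha_r=0$, we are in the Hardy space of the polydisc. Here I use the Hardy version of the construction of Theorem \ref{Resultado-Principal}: the original counterexample of \cite{BKS}, perturbed to a polynomial as explained in the introduction, and extended to $n$ variables exactly as in the higher-dimensional subsection by ignoring $z_3,\dots,z_n$.
\end{itemize}

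\emph{The key step: separation of variables.} In the first subcase I must check that the linear opa of $f(z)=g(z_m)$ in $D_{\boldsymbol\alpha}(\D^n)$ is the one-variable linear opa of $g$ in $D_{\alpha_m}(\D)$, viewed as a polynomial in $z_m$.

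The monomials of $f$ and of $z_mf$ involve only $z_m$. For $r\ne m$, the monomials of $z_rf$ all have $j_r=1$, so $z_rf$ is orthogonal to $1$, to $f$ and to $z_mf$.

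Now write $p_1=a+\sum_r c_rz_r$. The normal equations for $p_1$ split into two pieces:
\begin{itemize}
\item a two-by-two system in $(a,c_m)$, which coincides with the one-variable system;
\item the system in $(c_r)_{r\ne m}$, which is governed by the Gram matrix of $\{z_rf\}_{r\ne m}$ and has zero right-hand side.
\end{itemize}
These $z_rf$ are mutually orthogonal for distinct $r$, since their monomials differ, so that Gram matrix is diagonal with nonzero entries. Hence $c_r=0$ for $r\ne m$.

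On the $z_m$ axis, the weights $\prod_r(j_r+1)^{\alpha_r}$ reduce to $(j_m+1)^{\alpha_m}$, because $j_r=0$ for $r\neq m$. So the $(a,c_m)$ system is literally the $D_{\alpha_m}(\D)$ system. The opa therefore vanishes at $z_m=z_0$ with $z_0\in\D$, and at any fixed values of the other coordinates in $\D$, giving a zero in $\D^n$.

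\emph{Main obstacle: the Hardy subcase.} For $\boldsymbol\alpha=0$, Lemma \ref{Lema_contraejemplo_negativo} does not apply, and the translation trick fails because $N^{0}=1$. Moreover, $h$ itself is not zero-free. I would therefore rely on \cite{BKS} together with the stability argument from the introduction: the absence of zeros on the closed polydisc and the presence of a zero inside are both preserved under small perturbations, and the opa depends continuously on $f$. If an explicit polynomial is preferred, I would try $1+\lambda h_N$ with $N$ fixed, or with a different block, but citing \cite{BKS} suffices.
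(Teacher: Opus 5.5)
Your proposal is correct and follows the paper's proof. You use the same three cases: $\max_r\alpha_r>0$ via Theorem \ref{Caso_aniso}; some $\alpha_m<0$ via $f(z)=g(z_m)$ with $g$ from Lemma \ref{Lema_contraejemplo_negativo}; and $\boldsymbol{\alpha}=0$ via a polynomial perturbation of the \cite{BKS} example in two of the variables. Your normal-equation check that $c_r=0$ for $r\neq m$ simply spells out what the paper asserts without proof.

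One caution: drop the aside about trying $1+\lambda h_N$ when $\boldsymbol{\alpha}=0$. There the weights do not grow, so the condition $|b/a|>1$ becomes $12|\lambda|^2>\tfrac12+11|\lambda|^2$, i.e.\ $|\lambda|>1/\sqrt2$. For real such $\lambda$ the polynomial then vanishes on the real diagonal segment, because $h_N(1,1)=1$ and $h_N(-1,-1)=-3$, so it is not zero-free.
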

\begin{proof}
    If $\max \alpha_r >0$, the result follows from Theorem \ref{Caso_aniso}.
    
     Suppose next that $\alpha_i < 0$ for some $i$. Let $g$ be a polynomial as in Lemma \ref{Lema_contraejemplo_negativo} whose linear opa in $D_{\alpha_i}(\D)$ has a zero $\zeta \in \D$. Define $f(z_1, \dots, z_n) = g(z_i)$. This polynomial is zero-free in $\overline{\D}^n$, and the linear opa of $1/f$ is $P(z) = p(z_i)$, where $p$ is the linear opa of $1/g$. Thus, $P$ vanishes at the point whose $i$-th coordinate is $\zeta$ and whose other coordinates are zero.

    The remaining case is $\alpha_1 = \dots = \alpha_n = 0$. Here, we regard a polynomial counterexample from \cite{BKS} as a polynomial in the first two variables. With a similar argument, we can find a zero-free polynomial such that its linear opa has a zero in $\D^n$. This completes the proof.
\end{proof}

\end{document}